\documentclass[11pt,a4paper]{article}
\usepackage{mathrsfs}
\usepackage{amsmath,amsthm,color,graphicx}
\usepackage{amssymb} \usepackage{verbatim}
\usepackage{pict2e}
\usepackage[noblocks]{authblk}
\usepackage{hyperref}

\setlength{\textwidth}{172mm} \setlength{\oddsidemargin}{-5mm}
\setlength{\evensidemargin}{7mm} \setlength{\topmargin}{-20mm}
\setlength{\textheight}{245mm}

\newtheorem{myDef}{Definition}
\newtheorem{theorem}{Theorem}

\newtheorem{lemma}{Lemma}
\newtheorem{construction}{Construction}

\newtheorem{claim}{Claim}
\theoremstyle{definition}
\newtheorem*{remark}{Remark}

\newcommand{\floor}[1]{\left\lfloor{#1}\right\rfloor}

\def\F{\mathcal F}
\def\f{\mathbb F}

\providecommand{\keywords}[1]
{
 \small \begin{center}
 \textit{Keywords:} #1
 \end{center}
}

\baselineskip 15pt

\newcommand{\abs}[1]{\left\lvert{#1}\right\rvert}

\begin{document}

\title{Intersecting families of polynomials over finite fields}

\author[1]{Nika Salia}
\author[2,3]{Dávid Tóth}

\affil[1]{King Fahd University of Petroleum and Minerals, Dhahran, Saudi Arabia}
\affil[2]{HUN-REN Alfr\'ed R\'enyi Institute of Mathematics, Budapest,
Hungary}
\affil[3]{Department of Computer Science and Information Theory, Budapest University of Technology and Economics, Budapest, Hungary}

\date{}
\maketitle

\begin{abstract}

This paper establishes an analog of the Erdős–Ko–Rado theorem to polynomial rings over finite fields, affirmatively answering a conjecture of C. Tompkins.

A $k$-uniform family of subsets of a set of finite size $n$ is $\ell$-intersecting if any two subsets in the family intersect in at least $\ell$ elements. The study of such intersecting families is a core subject of extremal set theory, tracing its roots to the seminal 1961 Erdős–Ko–Rado theorem, which establishes a sharp upper bound on the size of these families. Finite fields, or Galois fields, represent a well-studied object in discrete mathematics since the 19th century. This paper explores an extension of the Erdős–Ko–Rado theorem to polynomial rings over finite fields.

Specifically, we determine the largest possible size of a family of monic polynomials, each of degree $n$, over a finite field $\f_q$, where every pair of polynomials in the family shares a common factor of degree at least  $\ell$. We establish that the upper bound for this size is $q^{n-\ell}$ and characterize all extremal families that achieve this maximum size.

Further extending our study to triple-intersecting families, where every triplet of polynomials shares a common factor of degree at least $\ell$, we prove that only trivial families achieve the corresponding upper bound. Moreover, by relaxing the conditions to include polynomials of degree at most $n$, we affirm that only trivial families achieve the corresponding upper bound.
\end{abstract}

{\keywords{polynomial rings over finite fields; combinatorial number theory; Erdős-Ko-Rado theorem}}

\section{Introduction}

In 1961, Erdős, Ko  and Rado~\cite{erdos1961intersection} 
established a foundational result in extremal set theory by proving that for any finite family $\F$ of $k$-subsets of a set with $n$ elements, $[n] := \{1, 2, \dots, n\}$, where every pair of subsets shares at least $\ell\leq k$ elements, the size of $\F$ is bounded above by $\binom{n-\ell}{k-\ell}$, provided $n$ is sufficiently large. This theorem has profoundly influenced the field due to its wide-ranging applications. 
Note that this problem is an instance of Erdős matching 
conjecture~\cite{erdos1965problem}.

The upper bound of the Erdős-Ko-Rado theorem is sharp and is achieved by a family 
with a
unique structure.
For large enough $n$ all extremal families are isomorphic to 
\[
\left\{\{n-\ell+1,\dots,n\}\cup A: A\in \binom{[n-\ell]}{k-\ell}\right\},
\]
where $\binom{B}{k}$ denotes all $k$-subsets of a set $B$. 

Hilton and Milner~\cite{hilton1967some} obtained stability of the Erdős-Ko-Rado theorem by establishing the maximal size of a $1$-intersecting family, 
where no single element intersects all members of the family, and described
the structure of such extremal families. 
Subsequently, using the Delta-system method, Frankl, Kostochka, and Mubayi~\cite{kostochka2017structure} strengthened the Hilton-Milner theorem.

Intersecting families are 
very well-studied.
Meagher and Purdy~\cite{meagher2011erdHos} extended the Erdős-Ko-Rado theorem for unbounded multisets, i.e., where each element has a uniform infinite bound,  using the graph homomorphism method, see also~\cite{anderson2002combinatorics,anderson1988erdos}.
Furthermore, Füredi, Gerbner, and Vizer~\cite{furedi2015discrete} expanded this result to an $\ell$-intersecting version for unbounded multisets, where the structure of the extremal set was determined by Meagher and Purdy~\cite{meagher2016intersection}.
Very recently Liao, Lv, Cao, and Lu~\cite{liao2024erdHos} extended the Erdős-Ko-Rado theorem for multisets with uniform multiplicity. 
Thus this result may be considered as an extension of both the Erdős-Ko-Rado theorem and the result of  Meagher and Purdy~\cite{meagher2011erdHos}.
These authors further refined their work by extending the Hilton-Milner theorem to uniformly bounded multisets~\cite{liao2023nontrivial}.

Inspired by the Erdős-Ko-Rado theorem, researchers have explored similar problems in different settings. 
This includes studies on Erdős-Ko-Rado problems within permutation groups~\cite{ellis2011intersecting,li2020erd,keller2024t}. 
Meagher, Spiga, and Tiep \cite{meagher2016erdHos} further extended these ideas by adapting the theorem to intersecting sets of permutations in finite $2$-transitive groups. For a deeper understanding of versions of the Erdős-Ko-Rado theorem with algebraic nature see the book by Godsil and Meagher~\cite{godsil2016erdos}.

Aguglia, Csajb{\'o}k and Weiner~\cite{aguglia2022intersecting} improved Adriaensen's~\cite{adriaensen2022erdHos} result and obtained a version of the  Erdős-Ko-Rado theorem for graphs of functions over a finite field. 
Gadouleau, Mariot and 
Mazzone~\cite{gadouleau2024maximal} proved an analog of the  
Erdős-Ko-Rado theorem for binary polynomials with pairwise linear common factors.

Building on some of the results discussed above, Tompkins~\cite{CT} conjectured a version of the  Erdős-Ko-Rado theorem for polynomials over finite fields. 
While the challenges associated with polynomials over the complex field directly mirror the results of Meagher and Purdy for multisets, studying polynomials over finite fields introduces unique characteristics. 
The varied structures of extremal families primarily drive this interest in these settings.
Specifically, we examine monic polynomials of fixed degrees over finite fields.

\subsection{The extremal constructions}

Let $\f_q[x] $ be the polynomial ring over the finite field $ \f_q $ with $ q $ elements. A family $ \F \subseteq \f_q[x] $ is termed \emph{$ \ell $-intersecting} if for every pair of polynomials $ p_1 $ and $ p_2 $ in $ \F $ we have $ \deg(\gcd(p_1, p_2)) \geq \ell $.
For a family $\F$ of polynomials and a polynomial $p$, we denote by $\F\cdot p$ the family generated from $\F$ by multiplying each element by $p$. Similarly, we 
write $\frac{\F}{p}:=\F\cdot \frac{1}{p}$ 
for any non-zero $p$ if all elements of
$\F$ are divisible by $p$.

A family that possesses a given property $\mathcal{P}$ and has the largest possible size is called an \emph{extremal family} for $\mathcal{P}$. If the property $\mathcal{P}$ is understood from the context, we simply refer to the family as extremal without explicitly mentioning $\mathcal{P}$.
In the following, we restrict ourselves to
\emph{monic} polynomials
and examine extremal $\ell$-intersecting families of them.
The corresponding results for 
general classes are easy consequences of the subsequent 
theorems. 

Motivated by the beauty of the topic,
Tompkins~\cite{CT} conjectured that 
a family consisting of all degree $n$ multiples
of a fixed degree $\ell$ polynomial
is  an  extremal $\ell$-intersecting 
 family of 
degree $n$-polynomials,
and asked if these examples 
are the only extremal 
ones.
In the following, a family described above will be referred to as 
\emph{trivial}.
\begin{construction}\label{const:Trivial}
    Given any non-negative 
    integers $n$ and $\ell$ with $n\geq \ell$ and a prime power $q$, consider a monic polynomial $g\in \f_q[x]$ of degree $\ell$, and set
    \[ 
         \F_{Triv}(g):=
         \left\{g\cdot p: p~\text{is a  monic polynomial over }\f_q,
         ~\deg(p)=n-\ell\right\}. 
    \]
\end{construction}

Note that $ \F_{Triv}(g)$ is an $\ell$-intersecting 
family of monic polynomials of degree $n$ and 
the size of it is $q^{n-\ell}$, and hence, by
Theorem~\ref{thm:extremal_number} below, 
it is of maximum size.
The extremal families with the same size and different structure will 
be referred to as \emph{non-trivial}.

As we will soon see, there are infinitely many non-trivial
extremal constructions for any $q$. To describe all of them, 
we first note that once $\F$ is 
an extremal $\ell$-intersecting family of degree $n$ polynomials and $g\in \f_q[x]$
is any monic polynomial, then $\F\cdot g$ is an  
extremal
$(\ell+\deg(g))$-intersecting family with elements of degree $n+\deg(g)$
(this follows again from Theorem~\ref{thm:extremal_number}).
Similarly, if a monic 
polynomial $g$ divides every element of the extremal
family $\F$,
then  $\frac{\F}{g}$ is still extremal. Loosely speaking,
it is enough to describe the extremal families up to polynomial 
multiples.

\begin{myDef}
    A maximal size $\ell$-intersecting family $\F$ of degree $n$ polynomials over 
    $\f_q$ is \emph{primary} if no non-unit polynomial divides every
    element of $\F$. 
\end{myDef}

To make our previous statement precise, it is clear  that the 
description of all extremal families is equivalent to the description 
of all primary examples. 
Note that a trivial extremal family is primary if and only if 
it is $0$-intersecting and consists of all
monic polynomials of a fixed degree.

We give an example of primary extremal families. 
Let $q>2$ be a prime power, $\ell=q-2$, $n=q-1$, 
$H_1:=\prod_{a\in \f_q}(x-a)$  and
\[
\F_{1}:=
\left\{\frac{H_1}{(x-a)}:a\in \f_q\right\}.
\]    
It is easy to see that 
$\F_{1}$ 
is a non-trivial, $\ell$-intersecting set of degree $n$ polynomials  of size $q^{n-\ell}=q$.
Note that $H_1$ is a monic polynomial that equals the least common multiple of all polynomials of degree one. 
This example can easily be generalized to develop a broad class of constructions.
In the following construction, the constant $d$ can be interpreted as the difference $n-\ell$.

\begin{construction}\label{Const:Primary_COnstruction}
 Let $d$ be a positive integer and $H_d$ be the monic polynomial equal to the least common multiple of all 
 degree $d$
 polynomials
 over $\f_q$.
 We set
\begin{equation*}
\F_{d}:=
\left\{ \frac{H_d}{p}:p~\text{is a  monic polynomial over }\f_q,
~\deg(p)=d
\right\}.
\end{equation*}
    
\end{construction}

Note that  $\F_{d}$ is a $(\deg(H_d)-2d)$-intersecting  
family of degree $\deg(H_d)-d$ monic polynomials. 
The size of $\F_{d}$ is $q^d=q^{(\deg(H_d)-d)-(\deg(H_d)-2d)}$,
hence - as we 
will soon prove - it is of maximum size.
Moreover, apart from the case $q=2$, 
$d=1$, $\F_d$
is non-trivial, and since no non-unit polynomial divides 
every element of the family,
 it is also primary. 
Remark that this construction can be viewed as similar to the one in the original Erdős–Ko–Rado theorem when the ground set is smaller relative to the sizes of the sets and their intersection size.

One of our main goals is to show that the aforementioned primary 
constructions are the only possible ones with one single exception 
that we give as the closure of this introductory section:
\begin{construction}\label{Const:exceptional_construction}
If $q=2$, $\ell=1$ and $n=3$, then
 \begin{equation*}
\F=\{x^2(x+1), ~x(x+1)^2,~ x(x^2+x+1),~ (x+1)(x^2+x+1)\}    
\end{equation*}
is a maximum size primary $1$-intersecting family
of degree $3$ monic polynomials in $\f_2[x]$ 
consisting of $q^2=q^{n-\ell}$
elements. Observe that $\F$ differs from the family $\F_2$ defined
in Construction~\ref{Const:Primary_COnstruction}.
\end{construction}

\section{Main Results}
In this section, we give our main results about extremal
$\ell$-intersecting polynomial families. 
First, we determine the size of
such sets, then we continue by examining
the possible structures of them.

\begin{theorem} \label{thm:extremal_number}
Given any non-negative integers $ n $ and $ \ell $ with $n\geq \ell$ and a prime power $ q $, consider $ \F \subseteq \f_q[x] $, an $\ell$-intersecting family of monic polynomials of degree $n$.
Then we have
\[
\left|\F\right| \leq q^{n-\ell}.
\]
If $ n > 2\ell$, then any extremal $\F$ is trivial with the sole exception 
given in Construction~\ref{Const:exceptional_construction}.
\end{theorem}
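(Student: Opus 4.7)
My plan is to prove the bound $|\F| \le q^{n-\ell}$ first, then classify the extremal families in the regime $n > 2\ell$.

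For the upper bound, a natural attempt is to construct an injection from $\F$ into the $q^{n-\ell}$ monic polynomials of degree $n-\ell$, for instance by choosing for each $f \in \F$ a monic degree-$\ell$ divisor $d(f)$ and mapping $f \mapsto f/d(f)$. However, the exceptional family of Construction~\ref{Const:exceptional_construction} shows that this naive matching can fail: the elements $x(x^2+x+1)$ and $(x+1)(x^2+x+1)$ each admit a unique monic degree-$1$ divisor and both quotient to $x^2+x+1$, so Hall's condition is not automatic. A more robust route is induction on $\ell$: the base case $\ell = 0$ is trivial since $|\F| \le q^n$, and for the inductive step one picks a monic irreducible $p$ dividing some element of $\F$, partitions $\F = \F_p \sqcup (\F \setminus \F_p)$ by divisibility, applies induction to $\F_p/p$ (which is $(\ell - \deg p)$-intersecting on degree-$(n-\deg p)$ polynomials, hence of size at most $q^{n-\ell}$), and uses a compression/shifting step to absorb $\F \setminus \F_p$ into $\F_p$ without breaking the intersecting property.

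For the classification, assume $|\F| = q^{n-\ell}$ and $n > 2\ell$, and set $g := \gcd(\F)$. If $\deg g \ge \ell$, picking any monic degree-$\ell$ divisor $g'$ of $g$ gives $\F \subseteq g' \cdot \{h : h \text{ monic}, \deg h = n-\ell\}$; since the target has exactly $q^{n-\ell}$ elements, equality of cardinalities forces $\F$ to be the trivial family $\F_{Triv}(g')$. If $\deg g < \ell$, dividing out by $g$ reduces to a primary $(\ell - \deg g)$-intersecting family of degree-$(n - \deg g)$ polynomials of the same size $q^{n-\ell}$, and the strict inequality $n - \deg g > 2(\ell - \deg g)$ persists, so we may assume $\F$ is primary from the outset.

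The main obstacle is then proving that the only primary extremal family with $n > 2\ell$ is the one from Construction~\ref{Const:exceptional_construction}. My plan is to fix an arbitrary element $f_0 \in \F$ and use that every other $f \in \F$ shares a monic divisor of degree at least $\ell$ with $f_0$, partitioning $\F$ over the monic degree-$\ell$ divisors of $f_0$. The hypothesis $n > 2\ell$ ensures that the quotients $f_0/d$ have degree exceeding $\ell$, which via the overall $\ell$-intersecting condition propagates strong rigidity constraints across the whole family. A careful combinatorial analysis exploiting both the primary condition $\gcd(\F) = 1$ and the arithmetic of $\f_q[x]$ should rule out all configurations except the exceptional one. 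That exception emerges precisely for $q = 2$, $\ell = 1$, $n = 3$ because $\f_2[x]$ contains the minimum number of small-degree irreducibles needed to support such a symmetric configuration; for larger $q$, the constraints leave no room for any non-trivial primary extremal family.
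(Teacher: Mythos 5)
Your proposal is a plan rather than a proof, and both of its main steps have genuine gaps. For the upper bound, you correctly observe that the naive injection $f \mapsto f/d(f)$ fails Hall's condition, but the replacement you offer --- induction on $\ell$ with ``a compression/shifting step to absorb $\F \setminus \F_p$ into $\F_p$'' --- is exactly the step that carries all the difficulty, and you neither define the compression nor argue that it preserves the $\ell$-intersecting property and the cardinality. No such machinery is needed: the bound follows from a short pigeonhole. If $|\F| > q^{n-\ell}$, two distinct members $p_1, p_2$ agree in the coefficients of $x^{n-1}, \dots, x^{\ell}$, so $p_1 - p_2$ is a nonzero polynomial of degree at most $\ell-1$; but $\gcd(p_1,p_2)$ divides $p_1 - p_2$ and has degree at least $\ell$, a contradiction.

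For the classification, your reduction to the primary case is fine, but the core step --- ``a careful combinatorial analysis \dots should rule out all configurations except the exceptional one'' --- is a placeholder for the entire argument. Partitioning $\F$ over the monic degree-$\ell$ divisors of a fixed $f_0$ does not by itself produce the rigidity you need. The missing idea is a lemma that exploits extremality quantitatively: if $|\F| = q^{n-\ell}$ and $f$ is any monic irreducible of degree $n-\ell$, then some member of $\F$ is divisible by $f$ (otherwise the $q^{n-\ell}$ members occupy at most $q^{n-\ell}-1$ nonzero residues modulo $f$, two of them coincide, and $(p_1-p_2)/f$ is a nonzero multiple of $\gcd(p_1,p_2)$ of degree at most $\ell-1$). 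Combined with the fact that there are at least two monic irreducibles of degree $n-\ell$ (except when $q=2$ and $n-\ell=2$, which is where the exceptional family lives), this gives the classification: writing $p_1 = f_1 g_1$ with $f_1$ irreducible of degree $n-\ell>\ell$, every member of $\F$ is divisible by $f_1$ or by $g_1$; if $\F$ is non-trivial there is some $p_2 = f_1 g_2$ with $g_2 \neq g_1$, and a member divisible by a second irreducible $f_2$ of degree $n-\ell$ (hence not by $f_1$, since $2(n-\ell)>n$) would have to be divisible by both $g_1$ and $g_2$, impossible in degree $\ell$. Your proposal contains neither the residue-counting lemma nor the input from the polynomial prime number theorem, so the classification does not go through as written.
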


It follows now that the examples given in the
introductory section (including 
Construction~\ref{const:Trivial},~\ref{Const:Primary_COnstruction},~\ref{Const:exceptional_construction}) 
are all extremal. 
Also, by Theorem~\ref{thm:extremal_number}, we have that all extremal families for which $n>2\ell+1$ are trivial, while for $n=2\ell+1$ there is a unique non-trivial extremal family. 
A natural question is
if the constant $2$ can be 
lowered with an additional constraint.
In fact, by the following theorem, for every constant $c$ greater than $1$ and large enough $\ell$ all extremal families are trivial.
Specifically, only a finite number of non-trivial extremal examples
exist for $n > c\ell$.
It is easy to see that the constant $c$ must be greater than $1$ by 
Construction~\ref{Const:Primary_COnstruction}. 
Indeed, for every 
$d=n-\ell$ and every $q$, we have a primary non-trivial extremal family~\eqref{Const:Primary_COnstruction} (except for the case $d=1$ and $q=2$).

\begin{theorem} \label{thm:trivial_thm}
For any constant
$c>1$
there is a threshold $\ell_c$ such that any extremal $\ell$-intersecting family $\F \subseteq \f_q[x]$ of monic polynomials of degree $n$ is trivial, provided $\ell > \ell_c$ and  $n > c\ell$. 
The threshold $\ell_c$ can be chosen independently of $q$.
Specifically, only a finite number of non-trivial extremal instances exist with $n > c\ell$, counting over all finite fields $\f_q$. 
\end{theorem}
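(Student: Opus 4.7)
The plan is to combine Theorem~\ref{thm:extremal_number} with a structural lemma that confines every element of a primary extremal family to the divisor lattice of the polynomial $H_d$ from Construction~\ref{Const:Primary_COnstruction}. The case $c \geq 2$ reduces immediately to Theorem~\ref{thm:extremal_number}, since the sole non-trivial extremal family with $n > 2\ell$ is Construction~\ref{Const:exceptional_construction}, which has $\ell = 1$. So I would assume $1 < c < 2$ and, writing $d := n - \ell$, work in the range $(c-1)\ell < d \leq \ell$.

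First I reduce to the primary case: setting $m := \deg \gcd(\F)$, the family $\F/\gcd(\F)$ is primary extremal of size $q^d$, with the same $d$ and with non-triviality preserved. The inequality $n > c\ell$ also persists on the reduced family, since $(n-m) - c(\ell-m) = (n - c\ell) + m(c-1) > 0$ for $c > 1$. So it suffices to bound the parameters of primary non-trivial extremal families.

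The central lemma is that every $f \in \F$ divides $H_d$: for any irreducible $p$, primary-ness yields some $g \in \F$ with $p \nmid g$, so $\gcd(f, g)$ is coprime to $p$, giving $\deg \gcd(f, g) \leq n - v_p(f) \deg p$; the $\ell$-intersecting condition then forces $v_p(f) \leq d/\deg p$, hence $v_p(f) \leq \lfloor d/\deg p \rfloor = v_p(H_d)$. The bijection $f \leftrightarrow H_d/f$ shows that the number of degree-$n$ divisors of $H_d$ equals the number of degree-$(\deg H_d - n)$ monic divisors, itself at most $q^{\deg H_d - n}$; extremality $|\F| = q^d$ then gives $q^d \leq q^{\deg H_d - n}$, so $n \leq \deg H_d - d$ and $\ell \leq \deg H_d - 2d$.

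The main obstacle is showing that, for $\ell$ above an absolute threshold, this bound is attained with equality, forcing $\F = \F_d$ of Construction~\ref{Const:Primary_COnstruction}. I would attack this via a duality argument with $U := \lcm \F$ and $\F^* := \{U/f : f \in \F\}$, which is itself primary extremal with parameters $(\deg U - n,\ \deg U - n - d,\ d)$; applying the structural lemma and Theorem~\ref{thm:extremal_number} to both $\F$ and $\F^*$ should rule out proper-subset configurations once $\ell \geq 2$ (the exceptional construction, where $\ell = 1$, is absorbed by taking $\ell_c \geq 1$). Granting $\F = \F_d$, the parameters $n = \deg H_d - d$ and $\ell = \deg H_d - 2d$ combined with $n > c\ell$ rearrange to $\deg H_d < d(2c-1)/(c-1)$. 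Using the bound $\deg H_d \geq dq$ (the $q$ linear irreducibles each appear in $H_d$ to the power $d$), this yields $q < (2c-1)/(c-1)$, depending only on $c$. For each such bounded $q \geq 2$, $\deg H_d$ grows super-linearly in $d$, so $d$ is bounded in terms of $c$, and the constraint $\ell < d/(c-1)$ (equivalent to $n > c\ell$) bounds $\ell$ by the desired $\ell_c$, yielding the finiteness of non-trivial extremal instances.
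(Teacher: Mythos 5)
Your opening moves are sound and genuinely different from the paper's route: the reduction to the primary case, the lemma that $v_p(f)\deg p\le d$ for every $f\in\F$ and every irreducible $p$ (so that every element of a primary $\ell$-intersecting family divides $H_d$), and the resulting count $q^d=|\F|\le q^{\deg H_d-n}$ are all correct. (The paper instead runs an induction on the sequence $c_1=2$, $c_{i+1}=2-\frac1{c_i}\to1$, analysing which elements are divisible by which irreducibles of degree $n-\ell$ and repeatedly passing to $\F/f_2$.)

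The gap is precisely where you write ``the main obstacle'': everything after the counting step rests on establishing $\F=\F_d$, i.e.\ on the equality $n=\deg H_d-d$, and the inequality $n\le \deg H_d-d$ alone bounds nothing (combined with $\ell<d/(c-1)$ it still leaves $q$ and $d$ free). The duality you propose does not close this. Writing $U=\lcm(\F)$, the family $\F^*=\{U/f: f\in\F\}$ is indeed primary, extremal and $\ell^*$-intersecting with $\ell^*=\deg U-n-d\ge0$; if $\deg U<n+2d$ then $n^*>2\ell^*$, Theorem~\ref{thm:extremal_number} forces $\F^*$ to be trivial or exceptional, triviality contradicts primariness when $\ell^*>0$, and one indeed concludes $\deg U=n+d$, hence $U=H_d$ and $\F=\F_d$. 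But in the complementary case $\deg U\ge n+2d$ one has $n^*\le 2\ell^*$, Theorem~\ref{thm:extremal_number} gives no information, the structural lemma applied to $\F^*$ only returns $U\mid H_d$, and iterating the duality is vacuous because $\lcm(\F^*)=U$ and $(\F^*)^*=\F$. You are then facing exactly the kind of object you started with --- a primary non-trivial extremal family with $n\le 2\ell$ --- and no descent. What is needed here is in substance Theorem~\ref{thm:primary_examples}, which the paper proves only after, and by means of, the full inductive case analysis in its proof of this theorem; supplying a self-contained argument for that classification is the real content of the step, and it is missing from the sketch.
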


Next, we describe all the possible extremal $\ell$-intersecting 
families. As it was detailed
in the introductory section,
this is equivalent to the description of 
the primary extremal examples. 
The trivial families were defined in Construction~\ref{const:Trivial}, and all of them 
belong to a $0$-intersecting primary family consisting
of all monic polynomials of a fixed (but otherwise arbitrary) degree.
In the forthcoming theorem, we provide a comprehensive characterization of all primary non-trivial extremal families.

\begin{theorem}\label{thm:primary_examples}
Every primary non-trivial extremal family different from Construction~\ref{Const:exceptional_construction} is a family from Construction~\ref{Const:Primary_COnstruction}.
\end{theorem}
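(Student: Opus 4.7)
The plan is to reduce $\F$ to a family from Construction~\ref{Const:Primary_COnstruction} by analyzing the \emph{dual family} $\F^{\ast}:=\{L/p:p\in \F\}$, where $L:=\lcm(\F)$, and applying Theorem~\ref{thm:extremal_number} on the dual side. Write $d:=n-\ell$, and let $v_\pi$ denote the $\pi$-adic valuation for an irreducible $\pi\in\f_q[x]$. First I would establish a structural lemma: for every irreducible $\pi$ of degree $\delta$ and every $p\in\F$, $v_\pi(p)\le \lfloor d/\delta\rfloor$. Indeed, primarity provides some $p'\in\F$ with $v_\pi(p')=0$; if $v_\pi(p)\,\delta>d$ then $\deg\gcd(p,p')\le \deg p-v_\pi(p)\,\delta<n-d=\ell$, contradicting the $\ell$-intersecting property. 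Consequently $p\mid H_d$ for every $p\in\F$, whence $L\mid H_d$ and $\deg L\le \deg H_d$.

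Next I would set up the duality. The map $p\mapsto L/p$ is injective, so $\F^{\ast}$ consists of $|\F|=q^d$ distinct monic polynomials of degree $N:=\deg L-n$, whence $\deg L\ge n+d$. The identity $\gcd(L/p_1,L/p_2)=L/\lcm(p_1,p_2)$ shows $\F^{\ast}$ is $k$-intersecting with $k:=\deg L-2n+\ell$, and $N-k=d$. Since $\gcd(\F^{\ast})=L/\lcm(\F)=1$, the family $\F^{\ast}$ is primary, and extremal for its parameters by Theorem~\ref{thm:extremal_number}.

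The argument then splits on $\deg L$. If $\deg L=n+d$, then $N=d$, $k=0$, and $\F^{\ast}$ is $q^d$ distinct monic polynomials of degree $d$, hence is the set of \emph{all} such polynomials; therefore every monic polynomial of degree $d$ divides $L$, so $H_d\mid L$, and combined with $L\mid H_d$ we obtain $L=H_d$, $\deg H_d=n+d$, and $\F=\{H_d/q:q\text{ monic of degree }d\}=\F_d$ (Construction~\ref{Const:Primary_COnstruction}). If $\deg L>n+d$ and $N>2k$ (equivalently $\deg L<n+2d$), Theorem~\ref{thm:extremal_number} classifies $\F^{\ast}$: since $\F^{\ast}$ is primary with $k\ge 1$ it cannot be trivial, so $\F^{\ast}$ must be Construction~\ref{Const:exceptional_construction}; a direct computation shows that family is self-dual, hence $\F=(\F^{\ast})^{\ast}=\F^{\ast}$ is also Construction~\ref{Const:exceptional_construction}, contradicting the hypothesis.

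The main obstacle is the remaining regime $N\le 2k$, equivalently $n+2d\le \deg L\le \deg H_d$, where Theorem~\ref{thm:extremal_number} does not directly classify $\F^{\ast}$ and both $\F$ and $\F^{\ast}$ lie in the hard regime $n\le 2\ell$. My plan is to exploit the bounds $n+2d\le \deg L\le \deg H_d$ to confine the parameters to $n\le \deg H_d-2d$, which leaves only finitely many triples $(q,n,\ell)$ per field. For each I would perform a direct combinatorial analysis of the $\pi$-adic profiles of the $q^d$ members of $\F\subseteq\{p:p\mid H_d,\ \deg p=n\}$ using the explicit factorization $H_d=\prod_\pi \pi^{\lfloor d/\deg\pi\rfloor}$, and show that the $\ell$-intersecting condition together with primarity and extremality leaves no admissible $\F$ beyond the constructions already accounted for. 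Eliminating this final subcase is expected to be the most delicate step.
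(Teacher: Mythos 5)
Your duality idea is genuinely different from the paper's argument (the paper derives the structure of $\F$ from the divisibility analysis of irreducibles of degree $n-\ell$ carried out in the proof of Theorem~\ref{thm:trivial_thm}), and the parts you complete are correct: the valuation bound $v_\pi(p)\le\lfloor d/\deg\pi\rfloor$ forced by primarity, hence $L\mid H_d$; the fact that $\F^\ast=\{L/p\}$ is a primary extremal $k$-intersecting family of degree $N$ with $N-k=d$; and the resolution of the cases $\deg L=n+d$ (giving $\F=\F_d$) and $n+d<\deg L<n+2d$ (giving the self-dual exceptional family). This is a nice reformulation. However, the proof is not complete, and the gap is not a technicality.

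The unresolved regime $N\le 2k$, i.e.\ $n+2d\le\deg L\le\deg H_d$, is precisely the hard core of the theorem: there both $\F$ and $\F^\ast$ sit in the range where Theorem~\ref{thm:extremal_number} gives no structural information, so the duality buys nothing and you are left to classify extremal families with $n\le 2\ell$ from scratch --- which is exactly what Theorem~\ref{thm:primary_examples} asserts and what remains to be proved. Moreover, your finiteness claim used to dismiss this regime is false. The condition $n\le\deg H_d-2d$ does \emph{not} leave finitely many triples per field: $\deg H_d=\sum_{\delta\le d}\delta N_q(\delta)\lfloor d/\delta\rfloor$ grows like $q^d$, so for each fixed $q$ and each $d$ the admissible $n$ ranges over an interval of length roughly $q^d$, and $d$ is unbounded; the set of triples $(q,n,\ell)$ in this regime is infinite even for $q=2$. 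A case-by-case "direct combinatorial analysis" therefore cannot close the argument, and no mechanism is offered to handle the general case. The paper avoids this by showing (via Lemma~\ref{lemma:there_is_pol_divisible_byiorreducible_f} and the dichotomy on pairs of irreducibles $f_i,f_j$ of degree $d$) that primarity forces the explicit structure \eqref{formula:F_structure}, after which a counting argument pins down $F\cdot G=H_d$; some argument of this kind is still needed in your approach to rule out $\deg L\ge n+2d$.
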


\begin{remark}
As a corollary, we note that once we require any $k\geq 3$ elements of a family $\F$ of degree $n$ monic polynomials to have a common divisor of degree at least $\ell$, then all extremal families are trivial.
Indeed, as such a family is $\ell$-intersecting, and the trivial construction has this property, the size of an extremal family is still $q^{n-\ell}$, and the non-trivial constructions in Theorem~\ref{thm:primary_examples} do not have this stronger property. \end{remark}

Finally, we shortly address another version of the problem where one allows polynomials with
different 
degrees in $\F$.
\begin{theorem}\label{thm:version2}
For any non-negative integer $ \ell $ let $D_\ell$ be a finite set of integers greater than
$\ell$ with $|D_\ell|\geq2$, and let $q$ be a prime power. Consider  
a family $ \F $ of monic polynomials  over $ \f_q $ whose degrees are in $D_\ell$ such that any two
elements
of $ \F $ have a common divisor of degree at least $\ell$. Then
\[
\left|\F\right| \leq \sum_{d\in D_\ell}q^{d-\ell},
\]
where the upper bound is sharp 
and every extremal  $\F$ is trivial.
\end{theorem}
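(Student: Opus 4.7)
The plan is to first bound $|\F|$ by summing Theorem~\ref{thm:extremal_number} over degrees and then to show that equality forces the trivial structure via three stages. Setting $\F_d := \{p \in \F : \deg p = d\}$, each $\F_d$ is an $\ell$-intersecting family of degree-$d$ monic polynomials, so $|\F_d| \le q^{d-\ell}$ by Theorem~\ref{thm:extremal_number}; summing gives $|\F| \le \sum_{d\in D_\ell} q^{d-\ell}$, with equality realized by the trivial family $\bigcup_{d\in D_\ell}\{gp : p \text{ monic of degree } d-\ell\}$ for any fixed monic $g$ of degree $\ell$. The main content is the uniqueness claim, which we would establish as follows.

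Assume $\F$ is extremal, so $|\F_d| = q^{d-\ell}$ for every $d$, and set $d^* := \max D_\ell$. The first stage is a multiplication lemma: for any $d_1 \in D_\ell\setminus\{d^*\}$, every $p \in \F_{d_1}$ and every monic $q$ of degree $d^* - d_1$ satisfy $pq \in \F_{d^*}$. The family $\F' := \F_{d^*} \cup \{pq : p \in \F_{d_1},\ q \text{ monic of degree } d^* - d_1\}$ consists of degree-$d^*$ monic polynomials and is $\ell$-intersecting: pairs inside $\F_{d^*}$ by hypothesis; products $pq, p'q'$ via $\gcd(pq, p'q') \ge \gcd(p, p')\ge \ell$; and cross-pairs via $\gcd(pq, r) \ge \gcd(p, r) \ge \ell$. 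Theorem~\ref{thm:extremal_number} gives $|\F'| \le q^{d^*-\ell} = |\F_{d^*}|$, which combined with $\F_{d^*}\subseteq\F'$ forces $\F' = \F_{d^*}$.

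The hard part is the second stage: proving $\F_{d^*}$ is itself trivial. By Theorems~\ref{thm:extremal_number} and~\ref{thm:primary_examples}, $\F_{d^*}$ is trivial, the family $\F_{d'}$ from Construction~\ref{Const:Primary_COnstruction} with $d' := d^* - \ell$, or the exceptional family from Construction~\ref{Const:exceptional_construction}. For the Construction~\ref{Const:Primary_COnstruction} case one has $d'\ge 2$ (since $d'=1$ forces $d^*=\ell+1$, leaving no integer strictly between $\ell$ and $d^*$ to serve as $d_1$); for any $p \in \F_{d_1}$ and any irreducible $\pi\mid p$ of degree $e$, choosing any monic $r$ of degree $d'$ with $v_\pi(r) = \lfloor d'/e\rfloor$ yields $H_{d'}/r \in \F_{d^*}$ with $v_\pi(H_{d'}/r)=0$, so $\deg\gcd(p, H_{d'}/r) \le d_1 - e\, v_\pi(p)$. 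The $\ell$-intersection condition therefore forces $e\, v_\pi(p) \le d_1 - \ell$ for every $\pi\mid p$, i.e.\ $p \mid H_{d_1-\ell}$. Since $d_1-\ell \le d'-1$, combining the identity $\deg H_{d'} - \deg H_{d'-1} = \sum_{e\mid d'} e\, I_e(q)$ (where $I_e(q)$ denotes the number of monic irreducible polynomials of degree $e$ over $\f_q$) with the lower bound $\sum_{e\mid d'} e\, I_e(q) \ge 2d'$ valid for $q\ge 2$ and $d'\ge 2$ gives by telescoping $\deg H_{d_1-\ell} < d_1$, contradicting $\deg p = d_1$. For Construction~\ref{Const:exceptional_construction} with $(q,\ell,d^*)=(2,1,3)$ the only admissible $d_1$ is $2$, and direct enumeration using the multiplication lemma pins down $\F_2 = \{x(x+1),\, x^2+x+1\}$; but these two elements are coprime, violating the $\ell$-intersection.

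The third stage lifts $g$ to all of $\F$: once $\F_{d^*}$ is trivial with common divisor $g$ of degree $\ell$, for any $d \in D_\ell$ with $d<d^*$ and $p \in \F_d$ the multiplication lemma yields $pq \in \F_{d^*}$, hence $g \mid pq$, for every monic $q$ of degree $d^*-d$. Taking the coprime pair $q = x^{d^*-d}$ and $q = (x+1)^{d^*-d}$ and setting $h:=\gcd(g, p)$, we deduce $g/h \mid \gcd(x^{d^*-d}, (x+1)^{d^*-d}) = 1$, so $g \mid p$. Every element of $\F$ is therefore a monic multiple of $g$, and the cardinalities $|\F_d| = q^{d-\ell}$ force $\F$ to equal the trivial family.
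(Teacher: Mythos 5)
Your Stage 0 bound, the multiplication lemma of Stage 1, and Stage 3 are all correct, and the overall architecture (reduce everything to the structure of $\F_{d^*}$ for $d^*=\max D_\ell$, using products with the coprime multipliers $x^{d^*-d}$ and $(x+1)^{d^*-d}$) is close in spirit to the paper's second proof, which likewise combines this trick with Theorem~\ref{thm:primary_examples}.

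The gap is in Stage 2, in the sentence ``By Theorems~\ref{thm:extremal_number} and~\ref{thm:primary_examples}, $\F_{d^*}$ is trivial, the family $\F_{d'}$ \dots, or the exceptional family.'' Theorem~\ref{thm:primary_examples} classifies only \emph{primary} non-trivial extremal families, so the correct alternative is that $\F_{d^*}$ is trivial or equals $g_0\cdot\F_{d'}$ or $g_0\cdot(\text{Construction~\ref{Const:exceptional_construction}})$ for some monic $g_0$ of degree $\ell_0<\ell$ (the gcd of $\F_{d^*}$); your argument silently assumes $g_0=1$. When $\ell_0>0$ the element you need --- a member of $\F_{d^*}$ with $\pi$-adic valuation $0$ --- need not exist: the members are $g_0H_{d'}/r$ and have valuation $v_\pi(g_0)$ at $\pi$, so for $\pi\mid g_0$ you only obtain $e\cdot\max\bigl(0,\,v_\pi(p)-v_\pi(g_0)\bigr)\le d_1-\ell$, which does not give $p\mid H_{d_1-\ell}$, and the telescoping inequality no longer closes as written. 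Concretely, for $q=2$ the family $x\cdot\F_2=\{x(x+1)^2(x^2+x+1),\;x^3(x^2+x+1),\;x^2(x+1)(x^2+x+1),\;x^3(x+1)^2\}$ is a non-trivial, non-primary extremal $3$-intersecting family of degree $5$ with $d'=2$ that your trichotomy does not cover; the exceptional case is likewise affected, since there $(q,\ell,d^*)$ can be $(2,1+\ell_0,3+\ell_0)$ rather than $(2,1,3)$. The repair is available inside your own proof: first run the Stage 3 coprime-pair argument with $g_0$ in place of $g$ to conclude $g_0\mid p$ for every $p\in\F_{d_1}$, observe that $\gcd(g_0p',g_0h)=g_0\gcd(p',h)$, and then apply your valuation and telescoping argument to $p/g_0$ against the primary family $\F_{d^*}/g_0$ with $\ell$ replaced by $\ell-\ell_0$ (one checks the numerical inequality still closes). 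This is essentially the reduction the paper performs; until it is inserted, the case analysis is incomplete.
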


Below we give two different proofs of this theorem. One of them
uses only Theorem~\ref{thm:extremal_number} and
a lemma stated and proved in the course of its proof,
while the other one is an application of 
Theorem~\ref{thm:primary_examples}.

\section{Proofs of Theorems}

The subsequent proofs of the theorems stated above 
have in big part a combinatorial flavor, but
we necessarily use some basic algebraic properties of
the ring $\f_q[x]$ and 
some simple number theoretic results.
Firstly, we will use the fact that $\f_q[x]$ is Euclidean and
hence a unique factorization domain, in particular, every
irreducible element of it is prime. 

Besides, we will need
some lower estimates on
the number $N_q(n)$ of degree $n$ monic irreducibles over $\f_q$. 
This is given by the prime number theorem for polynomials over
finite fields, in fact, we use the  explicit  bound
\begin{equation}\label{Nq_estimate}
    N_q(n)\geq \frac{q^n}{n}-\frac{q^{\frac{n}{2}}}{n}-q^{\frac{n}{3}}
\end{equation}
(see Theorem 2.2 in \cite{rosen2013number} and the estimate above it).
This result is an easy consequence of the well-known formula
\begin{equation}\label{Nq_eq}
    N_q(n)=\frac{1}{n}\sum_{d\mid n}\mu(n/d)q^d
\end{equation}
(see the Corollary to Proposition 2.1 in \cite{rosen2013number}).
From these one deduces easily the following:
\begin{claim}\label{claim:existence_of_irred}
 The number $N_q(n)$ is at least
    $2$ for every positive integer $n$ and prime power $q$, 
    except for the case $q=n=2$.
\end{claim}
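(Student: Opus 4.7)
The plan is a case analysis on $(q,n)$, combining the exact formula~\eqref{Nq_eq} in small parameters with the lower bound~\eqref{Nq_estimate} in the remaining cases. First I would dispatch $n = 1$ from $N_q(1) = q \geq 2$, and $n = 2$ directly from~\eqref{Nq_eq}, which gives $N_q(2) = q(q-1)/2$; this equals $1$ precisely when $q = 2$ (the stated exception) and is at least $3$ otherwise.

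For $n \geq 3$ I would split on whether $q = 2$ or $q \geq 3$. When $q \geq 3$, the bound~\eqref{Nq_estimate} already gives a comfortable margin: in the worst case $(q,n) = (3,3)$ it evaluates to $6 - \sqrt{3} > 4$, and a straightforward check confirms that the expression $q^n/n - q^{n/2}/n - q^{n/3}$ only grows as $q$ or $n$ increases, since $q^n/n$ dwarfs the two correction terms. When $q = 2$ and $n \in \{3,4\}$, I would compute directly from~\eqref{Nq_eq}: namely $N_2(3) = (2^3 - 2)/3 = 2$ and $N_2(4) = (2^4 - 2^2)/4 = 3$. For $q = 2$ and $n \geq 5$, I would invoke~\eqref{Nq_estimate} once more.

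The only delicate point is that~\eqref{Nq_estimate} is essentially tight at $(q,n) = (2,5)$, giving $2^5/5 - 2^{5/2}/5 - 2^{5/3}$; to rigorously conclude this is at least $2$, I would appeal to the rational comparisons $\sqrt{32} < 6$ and $\sqrt[3]{4} < 8/5$ (the latter from $(8/5)^3 = 4.096 > 4$), yielding a value strictly above $26/5 - 16/5 = 2$. For $n \geq 6$ and $q = 2$ the same bound already exceeds $5$ with plenty of slack, so once this single tight numerical verification is carried out, the claim follows from the case partition. I expect this tight check at $(2,5)$ to be the main (though mild) obstacle, precisely because it is the unique parameter configuration at which \eqref{Nq_estimate} does not automatically beat the threshold $2$ by a wide margin.
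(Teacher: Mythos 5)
Your proof is correct and follows essentially the same strategy as the paper's: apply the exact formula~\eqref{Nq_eq} in small cases and the lower bound~\eqref{Nq_estimate} once the parameters are large enough to give slack. The arithmetic at the tight spot $(q,n)=(2,5)$ is right: $\sqrt{32}<6$ and $(8/5)^3 = 512/125 > 4$ give $2^{5/3} = 2\sqrt[3]{4} < 16/5$, so the right-hand side of~\eqref{Nq_estimate} exceeds $32/5 - 6/5 - 16/5 = 2$, and since $N_2(5)$ is an integer the conclusion follows; the values $N_2(3)=2$ and $N_2(4)=3$ are also correct.

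The one substantive difference is that you first dispose of $n=1$ and $n=2$ for all $q$ via the exact formula, whereas the paper asserts that~\eqref{Nq_estimate} alone suffices ``in the cases $q\geq 4$.'' Taken literally that assertion slightly overreaches: for $(q,n)=(4,1)$ the bound gives $4-2-4^{1/3}\approx 0.41$, and for $(q,n)=(5,1)$ it gives roughly $1.05$, both below $2$. Your decomposition (handle $n\leq 2$ directly, then use the estimate only for $n\geq 3$) quietly sidesteps this and is the cleaner way to partition the cases. The monotonicity remark (``the expression only grows as $q$ or $n$ increases'') is stated heuristically rather than proved, but the paper's own proof is at the same level of detail, simply asserting the bound works in the relevant ranges, so this is not a gap relative to the original.
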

This simple but in some situations useful little statement follows 
directly from \eqref{Nq_estimate} in the cases $q\geq4$;
$q=3$, $n\geq3$ and $q=2$, $n\geq 5$, in all 
other cases one may  apply \eqref{Nq_eq}.
We are now ready to prove Theorem~\ref{thm:extremal_number}.

\begin{proof}[\textbf{Proof of Theorem~\ref{thm:extremal_number}}]
  Assume for the sake of contradiction that $ |\F| > q^{n-\ell}$. By the pigeonhole principle, there exist two distinct monic polynomials $ p_1 $ and $ p_2 $ in $ \F $ such that
  the coefficients of $x^{n-1},\dots, x^{\ell}$ are the same for both of the polynomials. 
  Hence we have
\begin{equation*}\label{eq:Deg_p_1-p_2}
    \deg(p_1 - p_2) \leq \ell-1.
\end{equation*}
Since $ \F $ is defined as $ \ell $-intersecting, the degree of the greatest common divisor of polynomials $ p_1 $ and $ p_2 $ in $ \F $ is at least $ \ell $. On the other hand  $ p_1 - p_2$ is a multiple of $ \gcd(p_1, p_2) $, thus $\ell-1\geq \deg(p_1-p_2)\geq \gcd(p_1, p_2)\geq \ell$, a contradiction.

The following statement has a key role in most arguments that follow,
hence we separate it as a lemma.

\begin{lemma}\label{lemma:there_is_pol_divisible_byiorreducible_f}
Let $\F$ be an extremal family of $\ell$-intersecting monic polynomials of degree $n$ over $\f_q$.
Then, for any irreducible monic polynomial $f$ of degree $n-\ell$ within $ \f_q[x] $, there exists a polynomial $ p $ in $ \F $ divisible by $ f $.   
 \end{lemma}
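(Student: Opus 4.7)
The plan is to apply a pigeonhole/bijectivity argument via the reduction map modulo $f$. Set $\phi\colon \F \to \f_q[x]/(f)$, sending each $p\in\F$ to its residue class modulo $f$. Since $f$ is irreducible of degree $n-\ell$, the quotient ring has exactly $q^{n-\ell}$ elements, which by extremality of $\F$ matches $|\F|$. So $\phi$ is injective if and only if it is surjective, and this dichotomy is what I would exploit.

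I would then argue by contradiction: assume no element of $\F$ is divisible by $f$. Then $0 \notin \phi(\F)$, so $\phi$ fails to be surjective, hence fails to be injective. Pick distinct $p_1, p_2 \in \F$ with $\phi(p_1) = \phi(p_2)$, i.e., $f \mid (p_1 - p_2)$. Because $p_1$ and $p_2$ are both monic of degree $n$, the difference $p_1 - p_2$ is a nonzero polynomial of degree at most $n-1$.

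The heart of the argument is then a degree comparison, in the same spirit as the opening pigeonhole argument of the main theorem above. Since $\F$ is $\ell$-intersecting, $g := \gcd(p_1, p_2)$ has $\deg g \geq \ell$ and $g \mid (p_1 - p_2)$. On the other hand, $f$ is irreducible and does not divide $p_1$ (by assumption), so $\gcd(f, g) = 1$. Therefore $f \cdot g \mid (p_1 - p_2)$, which forces
\[
n - 1 \;\geq\; \deg(p_1 - p_2) \;\geq\; \deg(f) + \deg(g) \;\geq\; (n-\ell) + \ell \;=\; n,
\]
a contradiction. Hence some $p \in \F$ must be divisible by $f$.

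The main thing to verify carefully is the coprimality $\gcd(f,g)=1$, but since $f$ is irreducible this is automatic from $f \nmid p_1$; aside from that, the proof is essentially a two-line pigeonhole once one sees to look at residues modulo $f$. I do not anticipate any genuine obstacle.
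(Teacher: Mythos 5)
Your proposal is correct and is essentially the paper's own argument: both use the pigeonhole principle on residues modulo $f$ (the paper counts the $q^{n-\ell}-1$ nonzero residues directly, you phrase it as non-surjectivity implying non-injectivity, which is the same thing), and both conclude by noting that $\gcd(p_1,p_2)$ is coprime to $f$ yet divides $p_1-p_2$, yielding the same degree contradiction. No issues.
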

\begin{proof}
As $\f_q[x]$ is Euclidean, any polynomial $ p $ in $ \F $  can 
be expressed as
\[
p = f \cdot s + r,
\]
where $ s $ is a monic polynomial of degree $ \ell $ and $ r $ is a polynomial with 
degree at most $ n-\ell-1 $. 
Suppose, for the sake of contradiction, that $ r \neq 0 $ for all $ p\in\F $.
Given that the cardinality of $ \F $ is $ q^{n-\ell} $ and the number of possible non-zero residues modulo $ f $ is one less, the pigeonhole principle implies that there must be two distinct polynomials, $ p_1 $ and $ p_2 $, in $ \F $ with the same residue modulo $ f $. Hence, $ f $ divides $ p_1 - p_2 $.
As neither $ p_1 $ nor $ p_2 $ is divisible by $ f $, it follows that $ \gcd(p_1, p_2) $ divides $ \frac{p_1-p_2}{f} $, a non-zero polynomial of degree  at most
$ n-1-(n-\ell)=\ell-1$, 
contradicting the assertion that $ \F $ is $ \ell $-intersecting.
\end{proof}

Suppose $ n > 2\ell$ and let $ \F $ be an $\ell$-intersecting family of polynomials with the maximum size $ q^{n-\ell} $. According to Claim~\ref{claim:existence_of_irred}, there exist at least two distinct irreducible monic polynomials of degree $ n-\ell $
except for the case $q=2$, $\ell=1$, and $n=3$ that will be handled at the end of the proof.

Consider $ f_1 $, an irreducible monic polynomial of degree $ n-\ell $ in $ \f_q[x] $. As per Lemma~\ref{lemma:there_is_pol_divisible_byiorreducible_f}, there exists a polynomial $ p_1 $ in $ \F $ that is divisible by $ f_1 $. Let $ p_1 = f_1 \cdot g_1 $, where $ g_1 $ is a monic polynomial of degree $ \ell $. Given that $ \F $ is $ \ell $-intersecting, any polynomial in $ \F $  must be divisible by either $ f_1 $ or $ g_1 $.
Therefore, either all polynomials in $\F$ are divisible by $g_1$ making $\F$ trivial, or there exists another polynomial $p_2 \in \F$ such that $p_2 = f_1 \cdot g_2$ with $g_2 \neq g_1$.

By Claim~\ref{claim:existence_of_irred}, 
there is another irreducible monic polynomial
$ f_2 $ of degree $ n-\ell $ in $ \f_q[x] $ distinct from $f_1$. 
According to Lemma~\ref{lemma:there_is_pol_divisible_byiorreducible_f}, there exists a polynomial $ p $ in $ \F $ that is divisible by $ f_2 $. Given that $ p $ is not divisible by $ f_1 $ due to $ n-\ell > \ell $, it follows that $ p $ must be divisible by $ g_1 $ and $ g_2 $. However, since $ g_1 $ and $ g_2 $ are distinct monic polynomials each of degree $ \ell $, their co-division of $ p $ is impossible, leading to a contradiction.

Assume now that $q=2$, $\ell=1$ and $n=3$. The above argument still gives that in a non-trivial maximal size example there are two polynomials divisible by the unique irreducible degree $2$ factor 
$x^2+x+1$.
It is straightforward to conclude that $\F$ must be
the family given in Construction~\ref{Const:exceptional_construction}, i.e.
\[
\F=\{x^2(x+1),~ x(x+1)^2,~ x(x^2+x+1), ~(x+1)~(x^2+x+1)\}.
\]

\end{proof}

\begin{proof}[\textbf{Proof of Theorem~\ref{thm:trivial_thm}}]
We prove by induction, starting with the assertion that the statement holds when
 $n>c_1 \ell$ for
 $c_1=2$ by Theorem~\ref{thm:extremal_number}. 
 We then show that this implies that the statement holds for $c_{i+1}\ell<n\leq c_i\ell$ and hence 
 for  $c_{i+1}\ell<n$ where
 $c_{i+1}=2-\frac1{c_i}$ for any integer $i\geq 1$.
 Since $c_i$ is strictly decreasing and converges to $1$, 
 this implies the statement of the theorem for every $c>1$.

The subsequent argument uses the existence of $k$ distinct irreducible monic polynomials of degree 
$n-\ell$, 
for some $k\in\mathbb{N^+}$.
We have that  by 
\eqref{Nq_estimate} for any fixed $k$
except when
 $q$ and $n-\ell$ are small. 
 If $d:=n-\ell$ is fixed, then
\[
c_{i+1}\ell<n\leq c_i\ell\Longleftrightarrow
\frac{d}{c_i-1}\leq\ell<\frac{d}{c_{i+1}-1},
\]
i.e. for any $k\in\mathbb{N}^+$  there are $k$ distinct irreducibles except for finitely many
 triples $(q,\ell,n)$.

Initially, we assume that for every pair of distinct 
irreducible polynomials $f,\tilde{f}\in\f_q[x]$ 
of degree $n-\ell$, there exists a polynomial $p\in \F$ such that $f\mid p$ and $\tilde{f}\nmid p$.
Let $f_1,\dots, f_k$ be distinct irreducible polynomials over $\f_q[x]$ of degree $n-\ell$, and
let us apply the assumption  with the choices $f=f_1$, $\tilde{f}=f_2$,
and $f=f_2$, $\tilde{f}=f_1$.
Then,
there exist $p_1,p_2\in\F$ such that $p_1=f_1\cdot g_1$, 
$p_2=f_2\cdot g_2$, $f_2\nmid p_1$
and $f_1\nmid p_2$. Since the degree of  $\gcd(p_1,p_2)$ is at least $\ell$, we have $\gcd(p_1,p_2)=\gcd(g_1,g_2)=g_1=g_2$. 
Now if $p_3=f_1\cdot g_3\in\F$ and $f_2\nmid p_3$, 
then $g_3=\gcd(p_2,p_3)=g_2=g_1$, i.e. $p_3=p_1$.
In other words, $p_1$ is the only polynomial in $\F$ that is a multiple of $f_1$ but not of $f_2$.

If $p_1$ is the only 
multiple of $f_1$, then every polynomial of $\F$ is a multiple $g_1$, and $\F$ is trivial.
Otherwise, we have a polynomial in $\F$ that is a multiple of $f_1 \cdot f_2$.
Let us fix such a $p_0=f_1\cdot f_2\cdot g_0\in\F$ and consider an element
 $p\in\F$ for which $f_1\nmid p$. 
 Then $\gcd(p,p_1)=g_1=g_2$, i.e. $p=g_2\cdot h$. 
 Also, $\gcd (p,p_0)=f_2\cdot g_0$ and $\gcd(f_2, g_2)=1$, thus $h=f_2$, that is $p=p_2$.
 Thus the only polynomial that is not divisible by $f_1$ is $p_2$.
 Repeating the argument for $f_1$ and an irreducible polynomial from $\{f_3,\dots, f_k\}$
 we conclude that $f_2\cdot f_3\cdots f_k$ divides $p_2$, which is impossible if
 \[
 k>\frac{c_{i}}{c_{i+1}-1}+1> \frac{n}{n-\ell}+1
 \]
 since then $(k-1)(n-\ell)>n$ holds.

 Next assume that there are  irreducibles $f_1$, $f_2$ of degree $n-\ell$ such that 
 $f_1\mid p\Rightarrow f_2\mid p$ holds
for every $p\in\F$. 
That is, if $p\in\F$ and $f_1\mid p$, then $p=f_1\cdot f_2\cdot g$ for some
monic polynomial $g\in \f_q[x]$ of degree $n-2(n-\ell)=2\ell -n$.
Let us fix such a $p\in\F$ (that exists by Lemma~\ref{lemma:there_is_pol_divisible_byiorreducible_f}).
If there is a $\hat{p}\in\F$ such that $f_2\nmid \hat{p}$
and hence $f_1\nmid \hat{p}$, then the greatest common divisor of $p$ and $\hat{p}$ must
divide $g$ and therefore its degree is at most $2\ell-n<\ell$ that is impossible.
Consequently, any element of $\F$ is divisible by $f_2$.
Note that if $\deg(f_2)=n-\ell=\ell$ (that is possible in the first step of the induction),
then $\F$ is trivial, so in the following we may assume $n-\ell<\ell$.
Let us consider now the family
\[
\F'=\frac{\F}{f_2}.
\]
Each polynomial of $\F'$ has degree $\ell$ and as $\F$ is $\ell$-intersecting, 
$\F'$ is $(2\ell-n)$-intersecting. 
Since  $1-2c_{i}+c_{i}c_{i+1}=0$,
 we have $\ell= c_{i}(2\ell-c_{i+1}\ell)>c_{i}(2\ell-n)$ and if
 $i>1$, then $2\ell-n$ is arbitrary large  once $\ell$ is large enough since  
 $2\ell-n\geq(2-c_i)\ell$ holds.
 Hence $\F'$ and therefore $\F$ must be trivial 
 for a large enough $\ell$ by the inductive assumption. 

 In the situation of the previous paragraph, we must handle the case 
 $i=1$ separately, since then $c_1=2$ and hence $2-c_1=0$ hold.
 But by Theorem~\ref{thm:extremal_number},
 the inequality $\ell>2(2\ell-n)$ implies that $\F'$
 is trivial except for the case $q=2$, $2\ell-n=1$ and $\ell=3$,
 that implies $n=5$. Hence if $\ell$ is bigger than $3$,
 then $\F'$ is trivial.

Finally, assume that $\F$ is non-trivial. Since $c_{i+1}\ell <n\leq c_i\ell $, there are only finitely 
many possibilities for $n$ and $\ell$. Now for any fixed $n$ and $\ell$, 
if $q$ and hence $k$ is big enough, then the argument above yields a non-trivial 
$(2\ell-n)$-intersecting family $\F'$ of degree $\ell$, and since
$\ell> c_i(2\ell-n)$, we have finitely many possibilities for $\F'$, hence
we have finitely many choices for $q$ as well. Lastly, for a fixed triple $(q,\ell,n)$
there are finitely many maximal size sets of $\ell$-intersecting polynomials of degree $n$,
hence the induction step and the proof is complete.
\end{proof}

\begin{proof}[\textbf{Proof of Theorem~\ref{thm:primary_examples}}]
Let us fix a prime power $q$ and a difference $d>0$, and assume that $\F$ is a maximal size primary 
(non-trivial) $\ell$-intersecting family of degree $n$ polynomials in $\f_q[x]$ such that $n-\ell=d$.
First, we assume the existence of at least two distinct irreducible polynomials of degree $d$,
that is, we exclude the case $q=2=d$ that will be handled later.

Notice that we already know a lot about the structure of $\F$ 
based on the proof of Theorem~\ref{thm:trivial_thm}. Indeed, by 
the argument of the fifth paragraph of that proof, 
if $f_1$ and
$f_2$ are distinct irreducibles of degree $n-\ell$ and $f_1\mid p\Rightarrow f_2\mid p$ for
any $p\in\F$, then any element of $\F$ is divisible by $f_2$ contradicting the assumption that
$\F$ is primary.

Hence for each pair of distinct irreducible monic polynomials $f_i$ and $f_j$ of degree $n-\ell$, $\F$ 
must contain a polynomial which is divisible by $f_i$ but not 
by $f_j$. 
In this case, as it was deduced in the fourth paragraph of the proof 
of  Theorem~\ref{thm:trivial_thm}, we get
that the structure of $\F$ is the following. 
Let $f_1,f_2,\dots, f_k$ be all irreducible monic polynomials of degree $d=n-\ell$  ($k\geq 2$) and let $F$ be the product of them. 
 We must have 
 \begin{equation}\label{formula:F_structure}
 \F=\left \{\frac{F}{f_i}\cdot G :1\leq i\leq k \right \} \cup 
 \left\{F\cdot \frac{G}{p}:  p\mid G,~ p~\text{monic},~ 
 \deg(p)=d=n-\ell   \right\},
 \end{equation}
for some monic polynomial $G$ coprime to $F$.

 Note that in the second set above we need to take all the appropriate divisors of $G$ because
 $\F$ is of maximal size. 
 Since the first set on the right-hand side of (\ref{formula:F_structure})
 has cardinality $N_q(d)$, the second one must contain $|\F|-N_q(d)=q^d-N_q(d)$
 polynomials, i.e. $G$ must be divisible by \emph{all} polynomials of degree $d$ that are not  irreducible.
 Thus, we have
\[
\F = \left\{ \frac{F\cdot G}{p} : p \mid F\cdot G,~ p~\text{monic},~\deg(p) = d = n - \ell \right\}.
\]
Since $F \cdot G$ is divisible by all degree $d$ polynomials, it is a multiple of their least common multiple.
Moreover, since $\F$ is a primary family, $F\cdot G$ in fact equals the monic least common multiple of all degree $d$ polynomials.
Thus $F\cdot G=H_d$ and $\F=\F_d$ as in Construction~\ref{Const:Primary_COnstruction}.

It remains to handle
the case $q=2=d$. 
First note 
that $|\F|=4$. 
By 
Lemma~\ref{lemma:there_is_pol_divisible_byiorreducible_f}, at least 
one element of $\F$ is divisible by $f(x)=x^2+x+1$, but  $\F$ is non-trivial, so
$f$ must divide in fact at least two elements. Also, since $\F$ is primary, $f$ cannot
divide all of the elements.

Assume that $f$ divides exactly $2$ elements of $\F$. Since $\F$ is primary,
no irreducible of degree at least $3$ divides any of its elements,
hence exactly two elements contain only linear factors.
It follows that $f^2$ cannot divide any element as $d=2$. Therefore
\[
\F=\{x^{\alpha_1}(x+1)^{\beta_1},\,x^{\alpha_2}(x+1)^{\beta_2},\,
x^{\alpha_3}(x+1)^{\beta_3}f(x),\,
x^{\alpha_4}(x+1)^{\beta_4}f(x)\}.
\]
As $f\nmid x^{\alpha_i}(x+1)^{\beta_i}$ for $i=1,2$ and $d=2$, we have that
$\alpha_1$ and $\alpha_2$ cannot be smaller than $\alpha_3$ or $\alpha_4$.
Since $\F$ is primary, $\alpha_3$ or $\alpha_4$ must be $0$. Without loss
of generality, we may assume $\alpha_4=0$. By a similar argument,
we get $\beta_3=0$. 
Then the greatest common divisor of the last two polynomials
is $f$, and since $d=2$, we infer $0<\alpha_3=\beta_4\leq 2$.

If $\alpha_3=\beta_4=2$, then
 $\alpha_1+\beta_1=\alpha_2+\beta_2=4$. 
By symmetry we may assume $\alpha_1>\alpha_2$ that implies $\beta_1<\beta_2$.
Since $\ell=4-d=2$, we have $\alpha_1-\alpha_2+\beta_2-\beta_1=2$, i.e. 
$\alpha_1-\alpha_2=1=\beta_2-\beta_1$. Hence $\F$ has the following form:
\[
\F=\{x^{\alpha+1}(x+1)^{\beta},\,x^{\alpha}(x+1)^{\beta+1},\,
x^{2}f(x),\,
(x+1)^{2}f(x)\}.
\]
But both $\alpha$ and $\beta$ must be at least $2$, that is impossible. 

The case $\alpha_3=\beta_4=1$ gives the family from Construction~\ref{Const:exceptional_construction} in a similar way.

Finally, if exactly $3$ elements of $\F$ are 
divisible by $f$, 
then as above, we get
\[
\F=\{x^{\alpha_1}(x+1)^{\beta_1},\,x^{\alpha_2}(x+1)^{\beta_2}f(x),\,
x^{\alpha_3}(x+1)^{\beta_3}f(x),\,
x^{\alpha_4}(x+1)^{\beta_4}f(x)\}.
\]
Again, we can assume $\alpha_4=\beta_3=0$ and $0<\alpha_3=\beta_4\leq 2$.
Since the degrees are the same and the last three polynomials are different,
the only possibility is $\alpha_3=\beta_4=2$, which forces the second polynomial
to be $x(x+1)f(x)$ and the first one to be $x^2(x+1)^2$:
\[
\F=\{x^{2}(x+1)^{2},\,x(x+1)f(x),\,
x^{2}f(x),\,
(x+1)^{2}f(x)\},
\]
that is exactly Construction~\ref{Const:Primary_COnstruction}.
\end{proof}

\begin{proof}[\textbf{Proof of Theorem~\ref{thm:version2}}]
Let $\F$ be an $\ell$-intersecting set of monic polynomials over $\f_q$ with degrees in $D_\ell$, and 
let $\F^d$ denote the set of degree $d$ elements of $\F$.
Then $\F^d$ is still an $\ell$-intersecting set for any $d$, hence $|\F^d|\leq q^{d-\ell}$ 
by Theorem~\ref{thm:extremal_number} and  $|\F|\leq \sum_{d\in D_\ell}q^{d-\ell}$ follows.
If $|\F|$ consists of all multiples of a fixed degree $\ell$ polynomial with degrees in $D_\ell$, 
then $|\F| =\sum_{d\in D_\ell}q^{d-\ell}$, hence the upper bound is sharp.

Assume that for some $\ell$ and $D_\ell$ there exists a non-trivial family $\F$ with
$|\F| =\sum_{d\in D_\ell}q^{d-\ell}$. 
We can choose such $\F$ so that $\ell$ is minimal.
Note that $|\F^d|=q^{d-\ell}$ must
hold for any $d\in D_\ell$, i.e. $\F^d$ is an extremal $\ell$-intersecting family.

Let us set $n=\max (D_\ell)$ and let $f\in\f_q[x]$ be an irreducible polynomial of degree $n-\ell$.
By Lemma~\ref{lemma:there_is_pol_divisible_byiorreducible_f} we have 
$f\cdot g\in \F^n$ for some $g\in\f_q[x]$ of degree $\ell$.
Assume that $p\in\F$, $\deg(p)<n$ and $f\mid p$, i.e. $p=f\cdot h$ where $\deg(h)<\ell$. 
If $\hat{p}\in \F$, then since
$\deg(\gcd(\hat{p},p))\geq \ell$ must hold, we obtain $f\mid \hat{p}$.
That is, every element of $\F$ is a multiple of $f$.
As $\F$ is non-trivial, we must have $n-\ell<\ell$, contradicting  the minimality of $\ell$
since we could consider $\frac{\F}{f}$ instead.
Therefore, $f\nmid p$ must hold and hence $\gcd(f\cdot g,p)=g$ giving $g\mid p$,
and this applies for any $p\in\F$
of degree less than $n$. 

Since $\F$ is non-trivial, there must be an element in $\F$ that is not divisible by $g$,
and by the previous paragraph, this element must be in $\F^n$. But $f\cdot g\in \F^n$,
hence such an element must be divisible by $f$, so there is an element $f\cdot \hat{g}\in\F^n$
with $\deg(\hat{g})=\ell$, $\hat{g}\neq g$. Repeating the argument of the previous paragraph with $\hat{g}$ instead
of $g$, we get that any $p\in\F$
of degree less than $n$ is divisible by $\hat{g}$. Since $\F^d$ is $\ell$-intersecting
of maximal size  for any $d\in D_\ell$ and $\deg(g)=\deg(\hat{g})=\ell$,
we infer $g=\hat{g}$ contradicting the choice of $\hat{g}$.
\end{proof}

\begin{proof}[\textbf{Second proof of Theorem~\ref{thm:version2}}]

Let $\F$ be an extremal $\ell$-intersecting set.
As in the previous proof we have $\abs{\F}=\sum_{d\in D_\ell}q^{d-\ell}$, and $\abs{\F^d}=q^{d-\ell}$ where $\F^d$ denotes the set of degree $d$ polynomials of $\F$ as above.

If all $\F^d$'s are trivial then let the monic polynomials $g_1$ and $g_2$ of degree $\ell$ be the greatest common divisors of $\F^{d_1}$ and $\F^{d_2}$ respectively for some distinct
$d_1,d_2\in D_{\ell}$.
If there are two distinct elements $a_1$ and $a_2$ of $\f_q$, such that the multiplicity of the 
factor $x-a_i$
is at least as large in $g_i$ 
as it is
in $g_{3-i}$ for $i\in\{1,2\}$ then we have
\[
\deg(\gcd(g_1(x-a_1)^{d_1-\ell},g_2(x-a_2)^{d_2-\ell}))=\deg(\gcd(g_1,g_2))\leq \ell.
\]
Since $g_1(x-a_1)^{d_1-\ell},g_2(x-a_2)^{d_2-\ell}$ 
are polynomials of the $\ell$-intersecting family $\F$,
we have  the inequality $\deg(\gcd(g_1,g_2))\geq \ell$ as well
and hence $g_1=g_2$.
Otherwise, without loss of generality, we may assume 
 that for every $a\in \f_q$
the multiplicity of the 
 factor $x-a$
in $g_1$ is strictly greater than in $g_2$. 
Therefore, the polynomial $g_2$ is divisible by an irreducible polynomial $f$ of degree at least two, where the multiplicity of $f$ in $g_2$ exceeds its multiplicity in $g_1$.
This simply implies
$\deg(\gcd(g_1,g_2)) \leq \ell-\deg(f)$ and 
\begin{equation*}\label{eq_g_1=g_2}
 \deg(\gcd(g_1(x-a)^{d_1-\ell},
 g_2f^{\floor{\frac{d_{2}-\ell}{\deg(f)}}}(x-a)^{d_{2}-\ell-\deg(f)
 \floor{\frac{d_{2}-\ell}{\deg(f)}}}))\leq (\deg(f)-1)+ \deg(\gcd(g_1,g_2))<\ell
\end{equation*}
for any  $a\in\f_q$,
that is impossible since $\mathcal{F}$ is $\ell$-intersecting.

Thus 
either we are done or, there is an  $n\in D_{\ell}$ such that $\F^{n}$ is a non-trivial  $\ell$-intersecting family of degree $n$ monic polynomials. 
Let $g$ be the greatest common divisor of all polynomials in $\F^n$ 
with $\deg(g)=\ell_{0}<\ell$. 
Let  us set $n_{1}:=n-\ell_{0}$, 
$\ell_{1}:=\ell-\ell_{0}$ and $\F^{n_{1},\ell_{1}}:=\frac{\F^{n}}{g}$. 
Then, by Theorem~\ref{thm:primary_examples}, either 
\[
\F^{n_1,\ell_1}=\F_{n_1-\ell_1}=
\left\{ \frac{H_{n_1-\ell_1}}{p}:p~\text{is a  monic polynomial over }\f_q,
~\deg(p)=n_1-\ell_1
\right\},
\]
where $H_{n_1-\ell_1}$ is the monic polynomial equal to the least common multiple of all degree ${n_1-\ell_1}$ polynomials over $\f_q$, 
or
\[
\F^{n_1,\ell_1}=\{x^2(x+1), ~x(x+1)^2,~ x(x^2+x+1),~ (x+1)(x^2+x+1)\}.
\]

If there is an $f\in \F$ such that $\deg(f)<n$ then, by the maximality of $\F$, the polynomials $f\cdot x^{n-\deg(f)}$ and $f\cdot (x-1)^{n-\deg(f)}$ are in $\F^n$,
and hence they are multiples of $g$. 
Therefore (as $x^{n-\deg(f)}$ and $(x-1)^{n-\deg(f)}$ are coprime)
$f$ is a multiple of  $g$.

If $\mathcal{F}^{n_1,\ell_1}$ is the family from 
Construction~\ref{Const:exceptional_construction},
and hence $q=2$, $n_1=3$, $\ell_1=1$, then
$\deg (f)=n-1$ since $n-2<\deg(f)<n$ as $\mathcal{F}$ is an $(n-2)$-intersecting family.
That is, $\mathcal{F}^{\deg(f)}$ must contain two elements of
the form $g \cdot p$ with $\deg(p)=2$.
As $p$ has a common factor with every element of $\mathcal{F}^{n_1,\ell_1}$, $p$ must be $x(x+1)$, thus there is no choice for two such polynomials, a contradiction.

If
$\F^{n_1,\ell_1}=\F_{n_1-\ell_1}$, then
assume first that there is a monic irreducible polynomial $p$ of degree $n_1-\ell_1$ which is coprime to
$\frac{f}{g}$.
Note that if $n_1-\ell_1=1$, then $q>2$, since otherwise there is no $\ell_1$-intersecting primary construction of degree $n_1$ polynomials. 
Then
without loss of generality, we may assume $p\neq x$ and $p\neq x-1$. Thus $\frac{f}{g}\cdot x^{n-\deg(f)}$ and $\frac{f}{g}\cdot (x-1)^{n-\deg(f)}$ are coprime to $p$ and by the structure of $\F^n$
they are both equal to 
$\frac{H_{n_1-\ell_1}}{p}$, 
a contradiction.

It follows that
$\frac{f}{g}$ is a multiple of all irreducible polynomials of degree $n_1-\ell_1$.
Let $f_{n_1,\ell_1}$ be an irreducible of degree 
$n_1-\ell_1=n-\ell$, note that it is coprime 
to $\frac{H_{n_1-\ell_1}}{f_{n_1,\ell_1}}$, and we have
\[
\deg\left(\gcd\left(\frac{H_{n_1-\ell_1}}{f_{n_1,\ell_1}}\cdot g, \frac{f}{g}\cdot g \right)\right)<\ell,
\]
since $\frac{f}{g}$ is a multiple of $f_{n_1,\ell_1}$ and  $\deg(f)<n$. 
A contradiction to $\mathcal{F}$ being
$\ell$-intersecting since $f$ and $\frac{H_{n_1-\ell_1}}{f_{n_1,\ell_1}}\cdot g\in \F$.

Finally, if $f\in \F$ such that $\deg(f)>n$, then $\F^{\deg(f)}$ is a trivial family by the previous argument, i.e. it consists
of all degree $\deg(f)$ multiples of a degree $\ell$ polynomial $h$.
Since $\F^{n}$ is non-trivial, hence there 
is a $p\in\F^n$ such
that the degree of
$\hat{h}=\gcd(h,p)$ is
less than $\ell$.
Since $\gcd(p,hx^{\deg(f)-\deg(h)})$ has degree at least $\ell$, 
it follows that $p/\hat{h}$ is a multiple of $x^{\ell-\deg(\hat{h})}$.
 But this is impossible
 since 
 \[
 \deg\left(
    \gcd\left(
    \left(\frac{px^{\deg(f)-n}}
 {\hat{h}x^{\ell-\deg(\hat{h})}}+1\right)h,p
        \right)
 \right)<\ell.
\]
\end{proof}

\section*{Acknowledgments}
We are grateful to Casey Tompkins for encouraging us to work on this project and for his motivating questions and suggestions. His guidance has been instrumental in advancing this work.

We are grateful to the reviewers for their valuable suggestions, which helped to improve the manuscript's clarity. 

The research of Salia was partially supported by the National Research, Development
and Innovation Office NKFIH, grant K132696.  

The research of D. Tóth was supported 
 by the Ministry of
	Innovation and  Technology and the National Research, 
  Development and
		Innovation Office  within the Artificial Intelligence National
		Laboratory of Hungary.
Project no. TKP2021-NVA-02 has been implemented with the support
provided by the Ministry of Culture and Innovation of Hungary from the
National Research, Development, and Innovation Fund, financed under the
TKP2021-NVA funding scheme.
The research was also supported by the MTA–RI Lendület "Momentum" Analytic Number Theory and Representation Theory Research Group and by the NKFIH (National Research, Development and Innovation Office) grant FK 135218.

\bibliographystyle{abbrv}
\bibliography{references.bib}
\end{document}